\newtheorem*{defn}{Definition}
\newtheorem{lemma}{Lemma}
\newtheorem{cor}{Corollary}
\newtheorem{theorem}{Theorem}
\newtheorem{prop}{Proposition}
\newtheorem*{lemma*}{Lemma}
\newtheorem*{thrm*}{Theorem}
\theoremstyle{definition}
\newtheorem*{remark}{Remark}
\newtheorem*{ack}{Acknowledgements}
\newcommand{\cinf}{C^\infty}
\newcommand{\ccinf}{C_c^\infty}
\newcommand{\cs}{\mathcal{E}^\prime}
\newcommand{\ds}{\mathcal{D}^\prime}
\newcommand{\supp}{\mathop{\mathrm{supp}}}
\newcommand{\Vol}{\mathrm{Vol}}
\newcommand{\WF}{\mathop{\mathrm{WF}}}
\renewcommand{\Im}{\mathop{\mathrm{Im}}}
\newcommand{\C}{\mathbb{C}}
\newcommand{\N}{\mathbb{N}}
\newcommand{\R}{\mathbb{R}}
\title{Injectivity and stability for a generic class of \\ generalized Radon transforms}
\author{Andrew Homan and Hanming Zhou}
\date{}
\begin{document}
\maketitle

\begin{abstract}
Let $(M, g)$ be an analytic, compact, Riemannian manifold with boundary, of dimension $n \ge 2$. We study a class of generalized Radon transforms, integrating over a family of hypersurfaces embedded in $M$, satisfying the Bolker condition \cite{quinto94}. Using analytic microlocal analysis, we prove a microlocal regularity theorem for generalized Radon transforms on analytic manifolds defined on an analytic family of hypersurfaces. We then show injectivity and stability for an open, dense subset of smooth generalized Radon transforms satisfying the Bolker condition, including the analytic ones.
\end{abstract}

\section{Introduction}

Let $(M, g)$ be an analytic, compact, Riemannian manifold with boundary, of dimension $n \ge 2$, with volume form denoted by $d\Vol$. Let $\Sigma$ be a family of embedded hypersurfaces. A generalized Radon transform takes each $f \in \cinf(M)$ to the set of its integrals over the hypersurfaces of $\Sigma$, with respect to the surface measure induced by the volume form. Often $\Sigma$ is itself a smooth manifold; for example, the Euclidean Radon transform is defined over the set of affine hypersurfaces in $\R^n$, which has $\R \times S^{n-1}$ as a double cover. Such transforms are found in applications to many other fields, including harmonic analysis, scattering theory, seismology and medical imaging.

The main questions regarding these transforms include determining conditions under which they are injective, finding when the transform has a stable inversion, and characterizing the range. We concentrate on the first two questions here. These questions are also important in the partial data case, where integrals are known only for a subset of $\Sigma$. In the case of the Euclidean Radon transform, we refer the reader to \cite{helgason99, natterer01, ehrenpreis03, helgason11} and references therein for the resolution of these problems, and their generalization to the Radon transform over symmetric spaces and other related contexts.

The geometric data of a generalized Radon transform can be encoded by an incidence relation between points on $M$ and the hypersurfaces in $\Sigma$ that contain them. Let $\Lambda \subset M \times \Sigma$ be this relation, i.e., the set of ordered pairs $(x, \sigma)$ such that $x \in \sigma$. One says $\Lambda$ is a double fibration when it is a smooth, embedded submanifold of $M \times \Sigma$ such that both canonical projections are smooth and their restrictions to $\Lambda$ form a fiber bundle over $M$ and $\Sigma$, respectively \cite{gelfand69}. 

Guillemin and Sternberg \cite{guillemin79, guillemin85, guillemin90} showed that given a Radon transform $R$ defined by a double fibration $\Lambda$, both $R$ and its adjoint $R^*$ (often called the generalized backprojection by analogy with the Euclidean case) are Fourier integral operators, and the canonical relation of $R$ is the conormal bundle $N^*\Lambda$ of the incidence relation. If in addition $\Lambda$ satisfies the Bolker condition, which says that the induced projection $\pi^*: N^*\Lambda \to T^*\Sigma$ is an embedding, then the normal operator $R^*R$ is an elliptic pseudodifferential operator, which yields invertibility up to smoothing error.

A stronger result than invertibility is a Helgason-type support theorem, by analogy with that of the Euclidean Radon transform \cite{helgason80}. Such a support theorem implies that if $f$ is \emph{a priori} of compact support, and $Rf = 0$ for all hypersurfaces intersecting the support, then $f = 0$. In the analytic category, there has been much work in this area (for example, \cite{boman87, boman93, quinto94, quinto06}) using analytic microlocal analysis and the Bolker condition to prove support theorems for analytic generalized Radon transforms (i.e., with $M, \Sigma$ analytic manifolds) with nonvanishing analytic weight. On the other hand, in $n = 2$ there is a counterexample in the smooth category due to Boman \cite{boman93a} of a function supported in the disk such that some weighted Radon transform over lines vanishes. For the weighted X-ray transform over curves, there is an analogous Bolker condition \cite{greenleaf90} and support theorems are known for a class of such transforms in $n \ge 3$ \cite{uhlmann12, zhou13} including the geodesic ray transform on an analytic, simple manifold over functions \cite{krishnan09} and over symmetric tensor fields \cite{krishnan09a}.

Our first result considers such analytic generalized Radon transforms and shows their analytic microlocal regularity. This builds upon similar results for the weighted X-ray transform over a generic class of curves \cite{dairbekov07, frigyik08} and in particular, geodesics \cite{stefanov04, stefanov05}. To avoid complications at the boundary of $M$, we embed it isometrically in a slightly larger, open analytic manifold $M_1$. In section 2, we show how to extend the definition of $R_w$ to a transform on $M_1$ in a stable way. We show:

\begin{theorem}
\label{microlocal-regularity}
Let $R_w$ be an analytic generalized Radon transform satisfying the Bolker condition, with $w$ an analytic, nonvanishing weight. Let $f \in \cs(M_1)$ be such that $R_wf(\sigma) = 0$ in a neighborhood of some hypersurface $\sigma_0 \in \Sigma$. Then the analytic wavefront set $\WF_A(f)$ does not intersect the conormal bundle $N^*\sigma_0$.
\end{theorem}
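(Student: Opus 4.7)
The strategy follows the analytic-microlocal template developed by Stefanov-Uhlmann for the geodesic ray transform and adapted to related settings by Frigyik-Stefanov-Uhlmann and Krishnan. The key input is that, as observed by Guillemin-Sternberg, $R_w$ is an analytic elliptic Fourier integral operator with canonical relation $(N^*\Lambda)'$, and the Bolker condition ensures that $\pi^*$ is an injective immersion, so this canonical relation is locally the graph of a real-analytic canonical transformation $T^*\Sigma \to T^*M_1$. The plan is to exploit this to express an analytic FBI transform of $f$ at $(x_0, \xi_0) \in N^*\sigma_0$ as an integral of $R_w f$ against an analytic test amplitude supported near $\sigma_0$; since $R_w f$ vanishes there, the FBI transform vanishes, and this is precisely the exponential decay criterion for $(x_0, \xi_0) \notin \WF_A(f)$ in Sj\"ostrand's characterization of the analytic wavefront set.

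Concretely, I would fix $(x_0, \xi_0) \in N^*\sigma_0$ and pick the unique $\eta_0 \in T^*_{\sigma_0}\Sigma$ with $\pi^*(x_0, \sigma_0; \xi_0, -\eta_0) = (\sigma_0, \eta_0)$ furnished by Bolker. In analytic coordinates near $\sigma_0$ I would construct an analytic phase $\phi(\sigma; y, \xi)$ parametrizing the incidence relation and such that, restricted to $\{\sigma : y \in \sigma\}$, it has a unique nondegenerate critical point $\sigma_c(y, \xi)$ with $\sigma_c(x_0, \xi_0) = \sigma_0$ and critical value reproducing the standard FBI phase $-\xi \cdot (x_0 - y) + i|x_0 - y|^2/2$. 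With a truncated analytic symbol $A_N(\sigma; x_0, \xi_0)$ of length $N \sim \lambda$, and a cutoff $\chi_N$ supported in the neighborhood of $\sigma_0$ where $R_w f$ vanishes, set
\[ T_\lambda f = \int e^{-i\lambda \phi(\sigma; x_0, \xi_0)} A_N(\sigma; x_0, \xi_0) \chi_N(\sigma) R_w f(\sigma) \, d\sigma. \]
Substituting the definition of $R_w$ and applying the analytic stationary phase lemma of Sj\"ostrand in $\sigma$ over the fibers $\{\sigma : y \in \sigma\}$ should yield, against $f$, a standard FBI kernel at $(x_0, \xi_0)$ multiplied by an elliptic analytic symbol, plus an $O(e^{-\lambda/C})$ error from the almost-analytic extensions. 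Since $\chi_N R_w f \equiv 0$, we have $T_\lambda f \equiv 0$, forcing the FBI transform of $f$ at $(x_0, \xi_0)$ to decay as $O(e^{-\lambda/C})$, which gives the conclusion.

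The main obstacle is the analytic symbol bookkeeping: one must choose $N$ proportional to $\lambda$ and build $A_N$ satisfying estimates of the form $|\partial^\alpha A_N| \leq C^{|\alpha| + N + 1}\alpha!\, N! / \lambda^N$ while keeping the errors from truncating both the symbol and the cutoff $\chi_N$ under uniform exponential control. The Bolker condition enters quantitatively by guaranteeing the nondegeneracy of the Hessian of $\phi|_{\{y \in \sigma\}}$ at $\sigma_c$, which is precisely what validates the analytic stationary phase with uniform constants and ensures that the compositional kernel on the $f$ side is in fact an elliptic FBI kernel at $(x_0, \xi_0)$; behavior near $\partial M$ is neutralized by the extension to $M_1$ from section 2 and plays no role in this local argument.
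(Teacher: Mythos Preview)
Your proposal is correct and follows essentially the same route as the paper: pair the vanishing $R_w f$ near $\sigma_0$ against an oscillatory kernel with large parameter $\lambda$ and quasianalytic cutoffs, then apply Sj\"ostrand's complex stationary phase to produce an FBI-type integral of $f$ at $(x_0,\xi_0)$ that must be $O(e^{-\lambda/C})$. The only cosmetic difference is that the paper works in the explicit Beylkin coordinates $\sigma=(s,\theta)$ furnished by the defining function $\varphi$, integrates out $s$ first, and then inserts the Gaussian weight $e^{-\lambda|\xi-\eta|^2/2 - i\lambda\varphi(y,\xi)}$ in the remaining fiber variable to manufacture the imaginary part of the phase, rather than positing an abstract $\phi(\sigma;y,\xi)$ with the required critical value.
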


The main tool is a complex stationary phase lemma of Sj\"ostrand \cite[Theorem 2.8, 2.10 ff.]{sjostrand82} and related techniques, which suffice in lieu of a hypothetical analytic calculus of Fourier integral operators. The proof of this theorem is given is section 3.

Theorem \ref{microlocal-regularity} also implies a local support theorem \cite[Prop 2.3]{quinto94} and in particular the injectivity of $R_w : L^2(M) \to H^{(n-1)/2}(\Sigma)$. In fact, the proof of the theorem shows that it suffices for $R_wf(\sigma)$ to be analytic in a neighborhood of $\sigma_0$. One then obtains a unique continuation result of the following type: if $f \in \cs(M_1)$ is analytic on one side of $\sigma_0$, and $R_wf(\sigma)$ is analytic in a neighborhood of $\sigma_0$, then there is a neighborhood of $\sigma_0$ on which $f$ is analytic.

Our second result is a stability estimate for a generic class of smooth generalized Radon transforms satisfying the Bolker condition. We restrict ourselves to those generalized Radon transforms studied by Beylkin \cite{beylkin84}, which have $\Sigma$ parametrized globally by the level sets of a smooth defining function $\varphi$ satisfying some conditions to be made explicit later.

\begin{theorem}
\label{stability-perturbation}
Let $(M, g)$ be an analytic Riemannian manifold with boundary. Take $R_w : L^2(M) \to H^{(n-1)/2}(\Sigma)$ to be an injective generalized Radon transform defined by $\varphi$ with weight $w$, satisfying the Bolker condition. Then there exists $K \gg n$ and a neighborhood of $(\varphi, w) \in C^K$ such that the generalized Radon transform $\tilde{R}_{\tilde{w}}$ defined on $(M, g)$ by a defining function and weight in this neighborhood is injective and for all $f \in L^2(M)$ there exists $C > 0$ such that
\[
  ||f||_{L^2(M)} \le C|| \tilde{R}_{\tilde{w}}^*\tilde{R}_{\tilde{w}}^{\vphantom{*}} f||_{H^{n-1}(M_1)}.
\]
\end{theorem}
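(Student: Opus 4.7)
The plan is to derive the stability estimate for the fixed transform $R_w$ from the Bolker ellipticity of its normal operator, and then propagate it to small $C^K$ perturbations of the defining function and weight. Set $N := R_w^*R_w$. By the Bolker condition and the Guillemin--Sternberg calculus cited in the introduction, $N$ is an elliptic pseudodifferential operator of order $-(n-1)$. A left parametrix $P$ of order $n-1$ with $PN = I + K$, where $K$ is smoothing and hence compact on $L^2(M)$, yields the Fredholm-type a priori bound
\[
  \|f\|_{L^2(M)} \le C \|Nf\|_{H^{n-1}(M_1)} + C \|Kf\|_{L^2(M)}.
\]

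Next, I would remove the smoothing error using the injectivity hypothesis. Since $R_w$ is injective, so is $N$. A standard argument by contradiction --- take a sequence $f_j$ saturating the inequality with $\|f_j\|_{L^2} = 1$, extract a weakly convergent subsequence, use compactness of $K$ to obtain a strong limit of $K f_j$, and use injectivity to identify the weak limit as zero --- rules out the compact term and produces
\[
  \|f\|_{L^2(M)} \le C_0 \|Nf\|_{H^{n-1}(M_1)}.
\]

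For the perturbation step, I would show that $\tilde N := \tilde R_{\tilde w}^* \tilde R_{\tilde w}$ is close to $N$ in the $L^2(M) \to H^{n-1}(M_1)$ operator norm whenever $(\tilde\varphi, \tilde w)$ is $C^K$-close to $(\varphi, w)$. The Schwartz kernel of $\tilde R_{\tilde w}$ is an oscillatory integral whose phase is built from $\tilde\varphi$ in the Beylkin framework and whose amplitude is built from $\tilde w$; composing with the adjoint and reducing via stationary phase (permitted by the Bolker condition) gives a pseudodifferential operator of order $-(n-1)$ whose full symbol is a polynomial expression in derivatives of $\tilde\varphi$ and $\tilde w$ up to some order depending on $n$. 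Choosing $K$ to dominate this order --- and to push the requisite $C^K$-pseudodifferential composition through --- yields continuity of the map $(\tilde\varphi, \tilde w) \mapsto \tilde N$ into bounded operators $L^2(M) \to H^{n-1}(M_1)$, so $\|\tilde N - N\|_{L^2 \to H^{n-1}} < \varepsilon$ for $(\tilde\varphi, \tilde w)$ sufficiently close. Substituting into the previous estimate,
\[
  \|f\|_{L^2} \le C_0 \|Nf\|_{H^{n-1}} \le C_0 \|\tilde N f\|_{H^{n-1}} + C_0 \varepsilon \|f\|_{L^2},
\]
and taking $\varepsilon < 1/(2 C_0)$ lets us absorb the last term and conclude $\|f\|_{L^2(M)} \le 2 C_0 \|\tilde R_{\tilde w}^* \tilde R_{\tilde w} f\|_{H^{n-1}(M_1)}$. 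Injectivity of $\tilde R_{\tilde w}$ is immediate, since $\tilde R_{\tilde w} f = 0$ implies $\tilde N f = 0$ and hence $f = 0$.

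The main obstacle is the continuity in the third step. In the smooth category this kind of stability of the normal operator is standard, but here $\tilde\varphi$ and $\tilde w$ have only finite regularity, so one must work inside a $C^K$ version of the pseudodifferential calculus and track explicitly how many derivatives of the phase and amplitude enter both the symbol of $\tilde N$ and the stationary-phase estimates in the composition $\tilde R_{\tilde w}^* \tilde R_{\tilde w}$. The value of $K \gg n$ required in the theorem is dictated by whichever of these derivative counts is largest, together with the Sobolev embedding needed to turn the $C^K$ smallness of the data into the desired operator-norm smallness.
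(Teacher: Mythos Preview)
Your proposal is correct and follows essentially the same three-step architecture as the paper: derive the stability estimate for the injective $R_w$ from ellipticity of $N=R_w^*R_w$ (the paper's Lemma~3, which cites \cite[Prop.~V.3.1]{taylor81} for the compactness absorption you sketch), establish $C^K$-continuity of $(\varphi,w)\mapsto N$ into $\mathcal{L}(L^2,H^{n-1})$ (the paper's Lemma~4), and absorb the perturbation term. The only real difference is that the paper makes the third step concrete by computing the amplitude of $N$ explicitly after a change of variables (Lemma~2), so the dependence on finitely many derivatives of $(\varphi,w)$ is visible and the continuity follows from the standard symbol-to-operator continuity \cite[Thm.~18.3.11]{hormander-i}; your worry about needing a finite-regularity pseudodifferential calculus is therefore unnecessary.
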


This follows from an analysis of the symbol of the normal operator $R_w^*R_w^{\vphantom{*}}$. As mentioned above, under the Bolker condition it is an elliptic pseudodifferential operator. We show that perturbing the defining function and weight slightly in $C^K$ perturbs the operator slightly, preserving the stability estimate.

While we work entirely on an analytic Riemannian manifold $(M, g)$ and do not perturb the metric in this result, we use the metric only to provide a convenient choice of surface measure, and to ensure the existence of a dense set of injective, generalized Radon transforms. We may then conclude:

\begin{cor}
On each analytic, compact Riemannian manifold with boundary, there is a generic set of generalized Radon transforms satisfying the Bolker condition that are both injective and stable.
\end{cor}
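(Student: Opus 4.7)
The plan is to combine Theorem \ref{microlocal-regularity} with Theorem \ref{stability-perturbation} in a standard density-plus-openness argument. Fix the analytic Riemannian manifold $(M,g)$ and an integer $K \gg n$ as in Theorem \ref{stability-perturbation}. Let $\mathcal{U} \subset C^K \times C^K$ denote the set of pairs $(\varphi, w)$ which define a generalized Radon transform satisfying the Bolker condition, with $w$ nonvanishing. The surface measure is fixed by $g$, so this really parametrizes a family of transforms $R_w$ on $(M,g)$.

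First I would verify that $\mathcal{U}$ is open in $C^K \times C^K$. Nonvanishing of $w$ is open in $C^0$, and the Bolker condition, being the statement that a certain map between finite-dimensional manifolds (built from finitely many derivatives of $\varphi$) is an immersion with injective projection, is an open condition on $\varphi$ in $C^K$ for $K$ large enough. Likewise the conditions on $\varphi$ ensuring that its level sets form a regular family of embedded hypersurfaces are $C^K$-open.

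Next I would observe that the analytic pairs $(\varphi, w)$ inside $\mathcal{U}$ are dense in $\mathcal{U}$ in the $C^K$ topology. This follows from a standard Stone--Weierstrass or analytic-approximation argument on the ambient manifold $M_1$: analytic functions are dense in $C^K$, and $\mathcal{U}$ is open, so the intersection of the analytic pairs with $\mathcal{U}$ is dense in $\mathcal{U}$. By Theorem \ref{microlocal-regularity} (applied via the local support theorem and the ellipticity of $R_w^* R_w$, as noted following its statement), every such analytic pair yields an injective transform $R_w : L^2(M) \to H^{(n-1)/2}(\Sigma)$.

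Finally, apply Theorem \ref{stability-perturbation} at each analytic pair: there is a $C^K$-neighborhood $V_{(\varphi,w)} \subset \mathcal{U}$ in which every transform is injective and satisfies the stability estimate. The union
\[
  \mathcal{G} \;=\; \bigcup_{(\varphi,w) \text{ analytic in } \mathcal{U}} V_{(\varphi,w)}
\]
is open by construction, and contains all analytic pairs in $\mathcal{U}$, hence is dense in $\mathcal{U}$. Every Radon transform parametrized by a point of $\mathcal{G}$ is both injective and stable in the sense of Theorem \ref{stability-perturbation}, which is the asserted genericity.

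The only mildly delicate step is checking that the analytic approximation can be performed within $\mathcal{U}$, i.e.\ that analytic Bolker-type defining functions with nonvanishing analytic weight exist near any smooth such pair. This is the main obstacle, but it reduces to approximating $(\varphi, w)$ in $C^K$ by analytic functions on the open analytic extension $M_1$ introduced in Section 2 and invoking openness of $\mathcal{U}$; the existence of at least one such analytic pair on $(M,g)$, needed to make $\mathcal{G}$ nonempty, is precisely what the paper's introduction alludes to when it notes that the metric is used ``to ensure the existence of a dense set of injective, generalized Radon transforms.''
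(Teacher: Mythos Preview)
Your proposal is correct and matches the paper's intended argument. The paper does not give an explicit proof of the corollary; it treats it as an immediate consequence of Theorem~\ref{microlocal-regularity} (injectivity for analytic data) together with Theorem~\ref{stability-perturbation} (openness of injectivity plus stability under $C^K$ perturbation), and your density-plus-openness write-up is precisely the argument the paper has in mind when it says the analytic metric is used ``to ensure the existence of a dense set of injective, generalized Radon transforms.''
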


We defer the proof of Theorem \ref{stability-perturbation} to section 4.

\begin{ack}
The first author is partly supported by NSF Grant DMS--1301646.
\end{ack}

\section{Generalized Radon Transforms}

In this section we fix notation and establish some basic facts about the generalized Radon transform, including a statement of the Bolker condition. For concreteness we consider the space of hypersurfaces $\Sigma$ as parameterized by a defining function, following Beylkin \cite{beylkin84}, though we only consider oriented hypersurfaces. To avoid difficulties occuring at the boundary of $M$, we assume $M$ is isometrically embedded in a slightly larger open manifold $M_1$, whose metric we also refer to by $g$. If we are considering the analytic category of Radon transforms, we will also assume $M_1$ is analytic. In the sequel, we will always consider $L^2(M)$ to be functions on $M_1$, extended by zero.

\begin{defn}
Let $\varphi \in \cinf(M_1 \times (\R^n\setminus 0))$. $\varphi$ is a \emph{defining function} when it satisfies the following conditions:
\begin{enumerate}
\item $\varphi(x, \theta)$ is positive homogeneous of degree one in the fiber variable.
\item $\varphi$ is non-degenerate in the sense that $d_x\varphi(x, \theta) \not= 0$.
\item The mixed Hessian of $\varphi$ is strictly positive, i.e.,
\[
 \det\left( \frac{\partial^2 \varphi}{\partial x^i \partial \theta^j} \right) > 0.
\]
\end{enumerate}
The level sets of $\varphi$ will be denoted by
\[
 H_{s, \theta} = \{ x \in M_1 : \varphi(x, \theta) = s \}.
\]
\end{defn}
Note that by homogeneity, $H_{s, \theta} = H_{\lambda s, \lambda \theta}$ for $\lambda > 0$. Therefore we can consider $\Sigma$ as globally parameterized by $(s, \theta) \in \R \times S^{n-1}$. Often we will also implicitly consider $\varphi$ as a function on $M_1 \times S^{n-1}$.

The third condition imposed on a defining function is a local form of Bolker's condition. This allows us to locally identify $(x, \theta) \in M_1 \times S^{n-1}$ with the covector $d_x\varphi(x, \theta)/|d_x\varphi(x, \theta)|_g \in S_x^*M_1$. We will assume in addition a stronger, global Bolker condition. 
\begin{defn}
A defining function $\varphi$ satisfies the \emph{global Bolker condition} if for each $\theta \in S^{n-1}$, the map $x \mapsto d_\theta\varphi(x, \theta)$ is injective, and for each $x \in M$, the map $\theta \mapsto d_x\varphi(x, \theta)$ is surjective.
\end{defn}
The first condition is roughly analogous to the ``no conjugate points'' condition assumed by \cite{frigyik08, krishnan09} for similar results regarding the geodesic ray transform, and the second ensures that every singularity is observable from some hypersurface in $\Sigma$. Note that generalized Radon transforms defined by a double fibration satisfying the Bolker condition as stated by Guillemin et.~al.~also satisfy this Bolker condition, see \cite[Lemma 3.5]{quinto94}.

Consider $f \in \cinf(M)$. We extend it by zero to a function on $M_1$ which we also denote by $f$. Let the generalized Radon transform $R_w$ determined by $(M, g, \varphi, w)$ be defined by
\[
 R_w f(s, \theta) = \int_{H_{s, \theta}} w(x, \theta) f(x)\, d\mu_{s, \theta},
\]
where $w \in \cinf(M_1 \times S^{n-1})$ is a smooth, nonvanishing weight and $d\mu_{s, \theta}$ is the volume form on $H_{s, \theta}$ induced by $d\Vol$. There exists a smooth, nonvanishing function $J(x, \theta)$ such that
\[
 d\mu_{s, \theta}(x) \wedge ds = J(x, \theta)\, d\Vol.
\]
We calculate the adjoint of $R_w$ in $L^2(M, d\Vol)$ to be
\begin{align*}
\int_{S^{n-1}} \int_\R (R_w f) \overline{g}\, ds\, d\theta &= \int_{S^{n-1}} \int_\R \int_{H_{s, \theta}} w(x, \theta) f(x)\overline{g}(s, \theta)\, d\mu_{s, \theta} \, ds\, d\theta \\
&= \int_{S^{n-1}} \int_{M_1} \overline{g}(\varphi(x, \theta), \theta) w(x, \theta) J(x, \theta) f(x)\, d\Vol\, d\theta
\end{align*}
Therefore
\[
 R^*_w g(x) = \int_{S^{n-1}} \overline{w}(x, \theta) \overline{J}(x, \theta) g(\varphi(x, \theta), \theta)\, d\theta.
\]
This is simply a generalized backprojection with weight $\overline{wJ}$.

\section{Microlocal regularity}

In this section, we take $(M_1, g)$ to be an analytic Riemannian manifold, $\varphi$ to be an analytic defining function, and $w$ to be an analytic nowhere vanishing weight. Given $f \in \cs(M_1)$, we are interested in the microlocal analyticity of $f$ given that of $R_w f$. (We extend $R_w$ to $\cs(M_1)$ by duality.) We will use the following definition of the analytic wavefront set, following Sj\"ostrand. There are alternative approaches to analytic wavefront set by Sato, Kawai, Kashiwara \cite{sato73} and also Bros and Iagolnitzer \cite{bros75}, which were shown to be equivalent by Bony \cite{bony76}.

\begin{defn}[{\cite[Def.\ 6.1]{sjostrand82}}]
Let $(x_0, \xi_0) \in T^*\R^n \setminus 0$ and let $\psi(x, y, \xi)$ be an analytic function defined in a neighborhood $U$ of $(x_0, x_0, \xi_0) \in \C^{3n}$ such that
\begin{enumerate}
\item For all $(x, x, \xi) \in U$ (i.e., $x=y$), we have 
\[
\psi(x, x, \xi) = 0 \text{\ and\ } \partial_x\psi(x, x, \xi) = \xi.
\]
\item There exists $C > 0$ such that for all $(x, y, \xi) \in U$, we have 
\[
\Im \psi(x, y, \xi) \ge C|x - y|^2.
\]
\end{enumerate}
Let $a(x, y, \xi)$ be an elliptic classical analytic symbol defined on $U$, see, e.g., \cite[Theorem 1.5]{sjostrand82}.

We say $u \in \ds(\R^n)$ is analytic microlocally near $(x_0, \xi_0)$ if there exists a cut-off function $\chi \in \ccinf(\R^n)$ with $\chi(x_0) = 1$ such that
\[
\int e^{i\lambda\psi(x, y, \xi)}a(x, y, \xi)\chi(y)\overline{u(y)}\, dy = O(e^{-\lambda/C}),
\]
for some $C > 0$, uniformly in a conic neighborhood of $(x_0, \xi_0)$.

The analytic wavefront set is the closed conic set $\WF_A(u) \subset T^*\R^n\setminus 0$, which is the complement of the set of covectors near which $u$ is microlocally analytic.
\end{defn}
We note that this definition is microlocal and invariantly defined, and therefore can be extended to distributions on analytic manifolds (see \cite[Theorem 8.5.1]{hormander-i} and the remarks following). In this case, for $u \in \ds(M_1)$, $\WF_A(u)$ is a closed conic subset of $T^*M_1 \setminus 0$.

Recall that since the mixed Hessian of $\varphi$ is strictly positive, we may locally identify $(x, \theta) \in M_1 \times S^{n-1}$ with the unit covector $d_x\varphi(x, \theta)/|d_x\varphi(x, \theta)|_g \in S^*M_1$. Fix a covector $(x_0, \theta_0) \in T^*M_1 \setminus 0$ with $s_0 = \varphi(x_0, \theta_0)$. From now on we will work in a small conic neighborhood of this covector.

\begin{prop}
If $R_wf(s, \theta) = 0$ for $(s, \theta)$ in a neighborhood of $(s_0, \theta_0)$, then $(x_0, d_x\varphi(x_0, \theta_0)) \not\in \WF_A(f)$.
\end{prop}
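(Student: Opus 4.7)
The plan is to apply a Sjöstrand-style complex stationary phase argument, following the general strategy of Stefanov--Uhlmann \cite{stefanov04,stefanov05} and Frigyik--Stefanov--Uhlmann \cite{frigyik08}. The idea is to construct a test integral against $R_w f$ which vanishes identically by hypothesis, then to transfer this vanishing, via the integral representation of $R_w$ and stationary phase in $\theta$, into an exponential decay estimate of Sjöstrand's form for $f$ at $(x_0, \xi_0)$, where $\xi_0 = d_x \varphi(x_0, \theta_0)$.

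Concretely, I would fix an analytic phase $\Psi(s, \theta, x, \xi)$ defined in a neighborhood of $(s_0, \theta_0, x_0, \xi_0)$ and a classical elliptic analytic symbol $a$, and a cutoff $\chi(s, \theta)$ supported in the neighborhood of $(s_0, \theta_0)$ on which $R_w f$ vanishes, and consider
\[
 T(x, \xi; \lambda) = \int e^{i\lambda \Psi(s, \theta, x, \xi)} a(s, \theta, x, \xi)\, \chi(s, \theta)\, R_w f(s, \theta)\, ds\, d\theta,
\]
which is identically zero. Substituting the integral definition of $R_w$ and integrating out the $s$ variable against $\delta(s - \varphi(y, \theta))$ gives
\[
 0 = T(x, \xi; \lambda) = \int \left( \int e^{i\lambda \Psi(\varphi(y, \theta), \theta, x, \xi)} a\, \chi\, w(y, \theta)\, J(y, \theta)\, d\theta \right) f(y)\, dy.
\]
The goal is to choose $\Psi$ so that, after applying complex stationary phase in $\theta$ to the inner integral, one obtains, up to an exponentially small error, an integral of Sjöstrand's form
\[
 \int e^{i\lambda \tilde{\Psi}(y, x, \xi)} \tilde{b}(y, x, \xi; \lambda) \tilde\chi(y) f(y)\, dy = O(e^{-\lambda/C})
\]
in a conic neighborhood of $(x_0, \xi_0)$, where $\tilde{\Psi}$ satisfies $\tilde{\Psi}(x, x, \xi) = 0$, $\partial_y \tilde{\Psi}(x, x, \xi) = -\xi$, and $\Im \tilde{\Psi}(y, x, \xi) \ge c|y-x|^2$; this is exactly what is needed to conclude $(x_0, \xi_0) \not\in \WF_A(f)$.

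The choice of $\Psi$ is the crux. I would take something of the form
\[
 \Psi(s, \theta, x, \xi) = -\alpha(x,\xi)\bigl(s - \varphi(x,\theta)\bigr) + \tfrac{i}{2}|\theta - \theta(x,\xi)|^2 + \tfrac{i}{2}|s - \varphi(x,\theta)|^2,
\]
where $\theta(x,\xi)$ is the unique direction with $d_x\varphi(x,\theta)$ parallel to $\xi$ (well-defined in a conic neighborhood of $(x_0,\xi_0)$ by the local form of the Bolker condition) and $\alpha(x,\xi) = |\xi|/|d_x\varphi(x,\theta(x,\xi))|_g$. The real part of $\Psi$ at the critical point $\theta = \theta(x,\xi)$, $s = \varphi(y,\theta)$ becomes, up to higher order in $(y-x)$, the linear expression $\alpha \, (\varphi(x,\theta(x,\xi)) - \varphi(y,\theta(x,\xi)))$, whose $y$-derivative at $y=x$ is $-\xi$; the imaginary part of the reduced phase inherits the positive-definite quadratic lower bound from the imaginary quadratic terms in $\Psi$.

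Applying Sj\"ostrand's complex stationary phase lemma \cite[Theorem 2.8]{sjostrand82} in the $\theta$ variables then produces the required $\tilde{\Psi}(y,x,\xi)$ and $\tilde b$ as an elliptic analytic symbol, provided the critical point with respect to $\theta$ is nondegenerate; this nondegeneracy follows from the injectivity of $x \mapsto d_\theta\varphi(x,\theta)$ part of the global Bolker condition, exactly as in Guillemin's argument for the Bolker hypothesis. The main obstacle I anticipate is the bookkeeping required to verify the three Sj\"ostrand conditions on $\tilde\Psi$ simultaneously: one must show that, after the reduction, the canonical-relation calculation is compatible with the Hessian contribution from the stationary phase, so that the imaginary quadratic terms in $\Psi$ are not destroyed. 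A secondary technical point is passing from the local patches where one does complex stationary phase (which is stated on $\R^n$) to the manifold $M_1$, which is handled by working in analytic local coordinates and invariance of the analytic wavefront set under analytic changes of variables \cite[Thm. 8.5.1]{hormander-i}.
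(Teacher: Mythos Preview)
Your strategy is essentially the one the paper uses: integrate $R_wf$ against a phase with a Gaussian imaginary part in the angular variable, expand $R_wf$ as an integral over $M_1$, and apply Sj\"ostrand's complex stationary phase in $\theta$ to land on an integral of Sj\"ostrand's form. The paper's phase is slightly leaner than yours---it uses only $e^{i\lambda s}$ in the $s$ variable and a single Gaussian $\exp(-\tfrac{\lambda}{2}|\xi-\eta|^2)$ in the angular variable, and it absorbs your factor $\alpha(x,\xi)$ into a change of variables $\eta' = d_x\varphi(x,\eta)$ at the very end---but the geometry of the critical set and the use of the global Bolker condition to exclude spurious real critical points are identical.

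There is, however, one concrete gap in your plan that the paper addresses explicitly and that you do not mention: the cutoff $\chi(s,\theta)$. After you substitute $s = \varphi(y,\theta)$, the amplitude in your $\theta$-integral contains the factor $\chi(\varphi(y,\theta),\theta)$, which is smooth but not analytic in $\theta$. Sj\"ostrand's complex stationary phase lemma requires an analytic amplitude, so you cannot apply it directly. For $y$ near $x_0$ and $\theta$ near $\theta_0$ this factor is identically $1$ and causes no trouble, but for $y$ elsewhere in $\supp f$ (which you do not control), $\varphi(y,\theta)$ may leave the plateau of $\chi$, and the contribution of those $y$ has to be shown to be $O(e^{-\lambda/C})$. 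The paper handles this by replacing the single cutoff $\chi$ with a \emph{sequence} of quasianalytic cutoffs $\chi_N$ in the $s$ variable satisfying $|\partial_s^{k}\chi_N| \le (CN)^k$ for $k\le N$, splitting the integration into a region $I_+$ near the diagonal (where $\chi_N\equiv 1$, so the amplitude is analytic and stationary phase applies) and a region $I_-$ away from it (where there are no critical points and $N$ integrations by parts, combined with the $\chi_N$ estimates, give $(CN/\lambda)^N$), and finally coupling $N\sim \lambda/(Ce)$. This is the standard device in \cite{stefanov04,stefanov05,frigyik08}, but it is the load-bearing technical step here and your proposal does not account for it; without it the argument does not close.
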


\begin{proof}
Let us fix a coordinate system. We already have local coordinates $(x, \theta)$ on $T^*M \setminus 0$. Without loss of generality we can take $s_0 = 0$ and $|\theta_0| = 1$. To simplify the coordinates on $\Sigma$, we perform a stereographic projection onto the tangent plane of the sphere at $\theta_0$, which is an analytic diffeomorphism mapping a neighborhood of $\theta_0 \in S^{n-1}$ to a neighborhood of the origin in $\R^{n-1}$. We refer to the coordinates on this tangent plane by $\xi$, and pass to a perhaps smaller neighborhood of $\Sigma$ with $|s| < 2\epsilon$ and $|\xi| < \delta$, with $\epsilon, \delta > 0$ being small parameters.

Much of the complexity of analytic microlocal calculus is due to the difficulty of localizing in the analytic category, as there are no suitable cut-off functions. Instead one often uses a sequence of quasianalytic cut-off functions $\chi_N \in \ccinf(\R)$, depending on $\epsilon$, for whose construction we refer to \cite{hormander71, treves80}. We will only use the following properties of this sequence:
\begin{enumerate}
\item $\supp \chi_N \subset (-2\epsilon, 2\epsilon)$ and $\chi_N(-\epsilon, \epsilon) = 1$.
\item For all $N \in \N$ and $k \le N$, the estimate
\[
\left| \partial^{(k)}_s \chi_N(s) \right| \le (CN)^k
\]
holds for a constant $C > 0$ independent of $N$.
\end{enumerate}

By assumption $R_w f(s, \xi) = 0$ for $|s| < 2\epsilon$ and $|\xi| < \delta$. Let $\lambda \gg 1$ be a large parameter, to be fixed later. This implies that
\begin{equation}
\label{local}
0 = \int e^{i\lambda s} \chi_N(s) \int_{H_{s, \xi}} w(x, \xi)f(x)\, d\mu_{s, \xi}\, ds.
\end{equation}
Recall that $\xi$ are analytic coordinates for the neighborhood of $\theta_0$ in $S^{n-1}$ that we are concerned with, and so here and in the sequel we write for brevity, e.g., $w(x, \xi) = w(x, \theta(\xi))$ and $d\mu_{s, \xi} = d\mu_{s, \theta(\xi)}$.

It follows from Beylkin's construction that
\[
 d\mu_{s, \xi}\wedge ds = J(x, \xi)\, d\mathrm{Vol}
\]
where $J(x, \xi)$ is an analytic, nonvanishing Jacobian and $d\mathrm{Vol}$ is the volume form on $M_1$ associated to the metric. Hence \eqref{local} reduces to the oscillating integral
\begin{equation}
\label{fio}
\int e^{i\lambda\varphi(x, \xi)} a_N(x, \xi)f(x)\, d\mathrm{Vol} = 0.
\end{equation}
Here $a_N(x, \xi)$ is a sequence of classical analytic symbols on the same neighborhood of $(x_0, 0) \in M_1 \times \R^{n-1}$. The coordinates on $x$ and $\xi$ are real-analytic, and so we may extend their domain of definition slightly by analytic continuation to a Grauert tube of a small neighborhood of $H_{0, 0} \subset M_1$ (for x) and a small neighborhood of the origin in $\C^{n-1}$ for $\xi$. This continuation in principle depends on the choice of analytic coordinates, but as the analytic wavefront set is invariantly defined the final result does not depend on this choice. We choose a perhaps smaller $\delta$ such that $\{\xi \in \C^{n-1} : |\xi| < \delta/2\}$ is contained in this neighborhood. We denote the local complex coordinate patch of $x_0$ as $U \subset \C^n$.

Let $y \in U$ and $\eta \in \C^{n-1}$, with $|\eta| < \delta/2$. Let $\rho(\xi) = 1$ when $|\xi| \le \delta$ and zero otherwise. Then we multiply \eqref{fio} by
\[
 \rho(\xi - \eta) \exp\left(-\frac{\lambda}{2} |\xi - \eta|^2 - i\lambda \varphi(y, \xi)\right),
\]
and integrate with respect to $\xi$. The resulting integral is of the form
\begin{equation}
\label{augment}
\iint e^{i\lambda\Phi(x, y, \xi, \eta)} b_N(x, \xi, \eta) f(x)\, d\mathrm{Vol}(x)\, d\xi = 0.
\end{equation}
Here $b_N$ is a sequence of classical analytic symbols defined on a complex neighborhood of $H_{0, 0} \times \{0\} \times \{0\}$ and $\Phi$ is the augmented phase function given by
\[
 \Phi(x, y, \xi, \eta) = \frac{i}{2}|\xi-\eta|^2 + \varphi(x, \xi) - \varphi(y, \xi).
\]

To estimate the left-hand side of \eqref{augment}, we intend to use the method of complex stationary phase. Therefore, we are interested in the critical points of the function $\xi \mapsto \Phi(x, y, \xi, \eta)$. Note that
\[
 \Phi_\xi(x, y, \xi, \eta) = i(\xi-\eta) + \partial_\xi\varphi(x, \xi) - \partial_\xi\varphi(y, \xi).
\]
There are clearly real critical points $\xi$ when $\xi = \eta$ and $x = y$. These critical points are non-degenerate, and therefore induce complex critical points $\xi_c(x, y, \eta) = \eta + i(y-x) + O(\delta)$.

Consider the situation when $y = 0$. Then for $x \not= 0$, the only real critical points are where $\partial_\xi\varphi(x, \xi) = \partial_\xi\varphi(y, \xi)$. However, this cannot happen by the global Bolker condition that we imposed on the defining function. By non-degeneracy again we see there are no real or complex critical points other than $\xi_c(x, y, \eta)$ for $(x, y, \xi, \eta)$ where $|y| < \delta$ and $|\xi - \eta| < \delta$.

Now we apply the complex stationary phase lemma \cite[Theorem 2.8, 2.10]{sjostrand82} to \eqref{augment}. As a preparatory step divide the integral into two regions; one over the region
\[
I_+ = \{(x, y, \xi, \eta) : |x-y| \le \delta/C_0, |\xi - \eta| < \delta\}
\] and one over the region 
\[
I_- = \{(x, y, \xi, \eta) : |x-y| > \delta/C_0, |\xi - \eta| < \delta\}.
\] 
Here $C_0 > 0$ is a constant chosen so that the critical points $\xi_c(x, y, \eta)$ lie within $I_+$ and none lie in $I_-$.

In $I_-$, we may define the usual operator $L$ such that $Le^{i\lambda\Phi} = e^{i\lambda\Phi}$ via
\[
 L = \frac{\partial_\xi \overline{\Phi} \cdot \partial_\xi}{i\lambda|\partial_\xi \Phi|^2}.
\]
This is well-defined as there are no critical points in $I_-$, so we may repeatedly integrate by parts:
\begin{align*}
\left| \int_{I_-} e^{i\lambda\Phi}b_N f\, d\mathrm{Vol}\, d\xi \right| &= \left| \int_{I_-} (L^N e^{i\lambda\Phi}) b_N f\, d\mathrm{Vol}\, d\xi \right| \\
 &\le \left| \int_{I_-} e^{i\lambda\Phi} (L^*)^N[b_N f]\, d\mathrm{Vol}\, d\xi \right| + \sum_{k=1}^{N} | \mathcal{B}_k |.
\end{align*}
The terms $\mathcal{B}_k$ are boundary terms that decay exponentially, due to the fact that $\Im\Phi > 0$ for $|\xi - \eta| = O(\delta)$. As for the integral on the right-hand side, we recall that $b_N$ is defined by
\begin{equation}
\label{symbol}
 b_N(x, \xi, \eta) = \rho(\xi-\eta)\chi_N(\varphi(x, \xi))w(x, \xi)J(x, \xi).
\end{equation}
The worst possible growth of $(L^*)^Nb_N$ in terms of $N$ occurs when all derivatives are applied to $\chi_N(\varphi(x, \xi))$, and in this case we may apply the estimate
\[
 \left| \partial^{(N)}_s\chi_N(s) \right| \le (CN)^N,
\]
which follows from the construction of the sequence of quasianalytic cut-off functions. Therefore,
\begin{equation}
\label{outside-estimate}
\left| \int_{I_-} e^{i\lambda\Phi}b_N f\, dx \right| = O\left( (CN/\lambda)^N + CNe^{-\lambda/C}\right).
\end{equation}

As for the integral over $I_+$, the cut-off functions $\chi_N(\varphi(x, \xi))$ are all equal to one. Therefore the amplitude on $I_+$ does not depend on $N$; we remove this dependence and refer to the amplitude restricted to this region as $b$. We know all of the critical points of $\xi \mapsto \Phi$ and can therefore apply the complex stationary phase lemma. This yields an estimate of the form
\[
\int e^{i\lambda\Phi} b f\, d\mathrm{Vol}\, d\xi = C\lambda^{-n/2}\int e^{i\lambda\psi} B f\, d\mathrm{Vol} + O\left( (CN/\lambda)^N + Ne^{-\lambda/C}\right).
\]
Here $\psi(x, y, \eta) = \Phi(x, y, \xi_c(x, y, \eta), \eta)$ and $B(x, y, \eta) = b(x, \xi_c(x, y, \eta), \eta)$. We may now fix $N$ such that $N \le (\lambda/Ce) \le N + 1$ to ensure the error is exponentially small.
\[
 \int e^{i\lambda\psi} B f\, d\mathrm{Vol} = O(e^{-\lambda/C}).
\]
Now $B(x, y, \eta)$ is an elliptic analytic symbol near $(x, y, \eta) = 0$ and $\psi(x, y, \eta)$ is a non-degenerate phase function. To show this implies $(x_0, \theta_0) \not\in \WF_A(f)$, we check the details of the characterization of the analytic wave front set given above. Recall
\[
 \psi(x, y, \eta) = \frac{i}{2}|\xi_c(x, y, \eta) - \eta|^2 + \varphi(x, \xi_c(x, y, \eta)) - \varphi(y, \xi_c(x, y, \eta)).
\]
Note that $\xi_c(x, x, \eta) = \eta$ for $x$ real, and therefore $\psi(x, x, \eta) = 0$. In addition
\[
\partial_x \psi(x, x, \eta) = \partial_x\varphi(x, \eta) = -\partial_y \psi(x, x, \eta).
\]
By the global Bolker condition we can make a change of variables $\eta^\prime$ so that $\eta^\prime = d_x\varphi(x, \eta)$. Finally, it is clear that $\Im\psi(x, y, \xi) \ge C|x-y|^2$ for $x, y$ real. Therefore, $(x_0, d_x\varphi(x_0, \theta_0)) \not\in \WF_A(f)$.
\end{proof}

Theorem \ref{microlocal-regularity} follows from applying the proposition to all conormals of a fixed hypersurface $\sigma_0$.

\begin{remark}
From the proof we see that it suffices for $R_wf(\sigma)$ to be analytic in a neighborhood of $(s_0, \theta_0)$. After microlocalization, the right-hand side of \eqref{fio} will be $O(e^{-\lambda/C})$ instead of zero, but this poses no problem.
\end{remark}

\section{Stability}

We now return to generalized Radon transforms with smooth defining function $\varphi : M_1 \times S^{n-1}$ and smooth, nonvanishing weight $w : M_1 \times S^{n-1}$. The object of interest in this section is the normal operator $N_w = R_w^* R_w$. It is known that the global Bolker condition implies $N_w$ is a pseudodifferential operator \cite[Prop 8.2]{guillemin79}. However, we require more detailed knowledge of the symbol of $N_w$ for the kind of stability estimates we prove later.

First we obtain a representation of the Schwartz kernel of $R_w$.

\begin{lemma}
The Schwartz kernel $K_{R_w} \in \ds(\R \times S^{n-1} \times M_1)$ of $R_w$ is
\[
K_{R_w}(s, \theta, y) = (2\pi)^{-1}\delta(s - \varphi(y, \theta)) w(y, \theta) J(y, \theta)
\]
where $J(y, \theta)$ is the smooth, nonvanishing function such that
\[
 d\mu_{s, \theta}(y) \wedge ds = J(y, \theta)\, d\mathrm{Vol}(y).
\]
\end{lemma}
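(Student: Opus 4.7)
The plan is to read $K_{R_w}$ off directly from the definition of $R_w$ by reinterpreting the integral over the level set $H_{s, \theta}$ as a distributional integral over $M_1$. Starting from
\[
R_w f(s, \theta) = \int_{H_{s, \theta}} w(y, \theta) f(y)\, d\mu_{s, \theta}(y),
\]
the key input is the coarea-type identity $d\mu_{s, \theta}(y) \wedge ds = J(y, \theta)\, d\mathrm{Vol}(y)$ already recorded in section~2. Testing against an arbitrary $\chi(s) \in \ccinf(\R)$ and using Fubini gives
\[
\int_\R \chi(s) R_w f(s, \theta)\, ds = \int_{M_1} \chi(\varphi(y, \theta)) w(y, \theta) J(y, \theta) f(y)\, d\mathrm{Vol}(y),
\]
which is exactly the statement that, as a distribution in $s$,
\[
R_w f(s, \theta) = \int_{M_1} \delta(s - \varphi(y, \theta)) w(y, \theta) J(y, \theta) f(y)\, d\mathrm{Vol}(y).
\]
Pairing further with a test function in $\theta$ and invoking the Schwartz kernel theorem then identifies $K_{R_w}$.

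The only subtle point is the $(2\pi)^{-1}$ prefactor in the stated kernel. I would read this as an oscillatory-integral normalization: writing $\delta(t) = (2\pi)^{-1}\int e^{i\sigma t}\, d\sigma$, the kernel takes the Fourier integral operator form
\[
K_{R_w}(s, \theta, y) = (2\pi)^{-1}\int e^{i\sigma(s - \varphi(y, \theta))} w(y, \theta) J(y, \theta)\, d\sigma,
\]
with phase $\sigma(s - \varphi(y, \theta))$ and amplitude $wJ$. This is precisely the form needed to compute the principal symbol of $N_w = R_w^* R_w$ and to track how it depends on the pair $(\varphi, w)$, which is the goal of the rest of section~4.

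Both the coarea identity and the Schwartz kernel theorem are standard, so there is no genuine obstacle. The statement is really a bookkeeping step: it records $K_{R_w}$ in a form convenient for the symbol calculus that follows, where keeping track of the factors $(2\pi)^{-1}$, $w$, and $J$ will matter when one compares $N_w$ with the perturbed $N_{\tilde w}$ in the proof of Theorem~\ref{stability-perturbation}.
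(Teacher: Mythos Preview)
Your argument is correct and essentially the same as the paper's: both hinge on the coarea identity $d\mu_{s,\theta}\wedge ds = J\, d\mathrm{Vol}$ to convert the surface integral into an integral over $M_1$, and then pair in the $s$ variable to read off the kernel. The paper pairs against $e^{-iss'}$ (i.e., takes the partial Fourier transform in $s$ and inverts), which automatically produces the oscillatory-integral form with its $(2\pi)^{-1}$ factor, whereas you pair against a general $\chi(s)$ and then invoke $\delta(t)=(2\pi)^{-1}\int e^{i\sigma t}\,d\sigma$ separately; these are interchangeable.
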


\begin{proof}
We perform a partial Fourier transform of $R_wf(s, \theta)$ in the $s$ variable, taking $s^\prime$ to be the dual variable of $s$. The change of variables then yields
\begin{align*}
 \mathcal{F}_sR_wf(s^\prime, \theta) &= \int_\R e^{-iss^\prime} \int_{H_{s, \theta}} w(y, \theta)f(y)\, d\mu_{s, \theta}\, ds \\
  &= \int_{M_1} e^{-is^\prime\varphi(y, \theta)} w(y, \theta) J(y, \theta) f(y)\, d\mathrm{Vol}(y).
\end{align*}
Therefore
\begin{align*}
R_wf(s, \theta) &= (2\pi)^{-1} \int_\R \int_{M_1} e^{i(s-\varphi(y, \theta))s^\prime} w(y, \theta) J(y, \theta) f(y)\, d\mathrm{Vol}(y)\, ds^\prime \\
 &= \int_{M_1} K_{R_w}(s, \theta, y) f(y)\, d\mathrm{Vol}(y). \qedhere
\end{align*}
\end{proof}

Similarly, the kernel of the generalized backprojection $R_w^\prime$ is
\[
K_{R^*_w} = (2\pi)^{-1} \delta(\varphi(x, \theta)-s) \overline{w}(x, \theta)\overline{J}(x, \theta).
\]
From this we see that the kernel of $N_w$ is
\begin{equation}
\label{normal-kernel}
K_{N_w} = (2\pi)^{-1} \iint e^{is^\prime(\varphi(x, \theta) - \varphi(y, \theta))} \overline{w}(x, \theta) \overline{J}(x, \theta) w(y, \theta) J(y, \theta) \, ds^\prime\, d\theta.
\end{equation}
We can now use this representation to find the principal symbol of the normal operator $N_w$.

\begin{lemma}
The principal symbol of $N_w$ is
\[
 p(x, \xi) = (2\pi)^{1-n}\frac{W(x, x, \xi/|\xi|) + W(x, x, -\xi/|\xi|)}{|\xi|^{n-1}},
\]
where $W$ is the auxillary function
\[
 W(x, y, \theta) = \overline{w}(x, \theta)\overline{J}(x, \theta)w(y, \theta)J(y, \theta).
\]
\end{lemma}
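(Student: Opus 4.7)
The plan is to take the kernel formula \eqref{normal-kernel} and bring it into the standard pseudodifferential form $K(x,y) = \int e^{i\langle x-y,\xi\rangle} a(x,y,\xi)\, d\xi$ by changing the fiber variables $(s',\theta) \in \R \times S^{n-1}$ to $\xi \in \R^n \setminus 0$. The principal symbol is then read off by restricting the amplitude to the diagonal $y = x$.

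The first step is to flatten the phase near the diagonal via the fundamental theorem of calculus:
\[
\varphi(x, \theta) - \varphi(y, \theta) = \langle x - y, \Psi(x, y, \theta)\rangle, \quad \Psi(x,y,\theta) = \int_0^1 d_x\varphi(y + t(x-y), \theta)\, dt,
\]
so that $\Psi(x,x,\theta) = d_x\varphi(x,\theta)$. The second step is to substitute $\xi = s'\Psi(x,y,\theta)$. The strict positivity of the mixed Hessian makes this a local diffeomorphism on each of the half-spaces $\{s' > 0\}$ and $\{s' < 0\}$, and by the global Bolker condition the combined map is, at $y = x$, a two-sheeted cover of $T^*_x M_1 \setminus 0$: the preimages of $\xi$ are pairs $(s'_\pm, \theta_\pm)$ with $\theta_\pm \in S^{n-1}$ identified via $\theta \mapsto d_x\varphi(x,\theta)/|d_x\varphi(x,\theta)|_g$ with $\pm\xi/|\xi|$. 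Homogeneity in $s'$ contributes a Jacobian factor $|s'|^{n-1} = (|\xi|/|\Psi|)^{n-1}$, producing the expected $|\xi|^{1-n}$ decay of a pseudodifferential operator of order $-(n-1)$, while the remaining angular part of the Jacobian is a nondegenerate determinant built from the mixed second derivatives of $\varphi$.

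Collecting the contributions from both sheets at $y = x$ gives the two terms $W(x,x,\xi/|\xi|)$ and $W(x,x,-\xi/|\xi|)$ in the numerator. The main obstacle is the bookkeeping of factors: one must track how the $(2\pi)^{-1}$ from \eqref{normal-kernel}, the angular Jacobian, the sphere volume element, and the factors of $J(x,\theta) = |d_x\varphi(x,\theta)|_g$ hidden in $W$ combine to produce the constant $(2\pi)^{1-n}$ and to leave only the clean $|\xi|^{1-n}$, independent of the local coordinates chosen on $\Sigma$. This is essentially the computation in Beylkin \cite{beylkin84}, rewritten in an intrinsic form suited to the Bolker-condition setup of section 2.
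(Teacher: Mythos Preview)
Your approach is essentially the paper's: split the $s'$-integral into positive and negative parts, linearize the phase near the diagonal via the fundamental theorem of calculus, change fiber variables so that the kernel takes standard pseudodifferential form, and read off the principal symbol by restricting the amplitude to $x=y$. The only cosmetic difference is that the paper first passes to polar coordinates $\xi=s'\theta$ (using the degree-one homogeneity of $\varphi$) and \emph{then} makes the linearizing change $\xi\mapsto\xi'=\int_0^1\partial_x\varphi(x+t(y-x),\xi)\,dt$, whereas you compose these two steps into the single substitution $\xi=s'\Psi(x,y,\theta)$; both versions leave the detailed Jacobian bookkeeping for the constant $(2\pi)^{1-n}$ to the reader.
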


\begin{proof}
Beginning from \eqref{normal-kernel}, we split the integration over $\R$ into $\{s^\prime > 0\}$ and $\{s^\prime < 0\}$. Using the positive homogeneity of the defining function, we rewrite the integral as
\begin{align*}
K_{N_w} &= \int_{S^{n-1}}\int_0^\infty e^{i(\varphi(x, s^\prime\theta) - \varphi(y, s^\prime\theta))} W(x, y, \theta)\, ds^\prime \, d\theta \\
 &+ \int_{S^{n-1}}\int_0^\infty e^{-i(\varphi(x, s^\prime\theta) - \varphi(y, s^\prime\theta))} W(x, y, \theta) \, ds^\prime\, d\theta. \\
 &= K_{N_w}^+ + K_{N_w}^-.
\end{align*}
Here $K_{N_w}^+$ and $K_{N_w}^-$ are the Schwartz kernels of the operators $N_w^+$ and $N_w^-$ respectively, so that $N_w = N_w^+ + N_w^-$. We work with each term separately. Let $\xi = s^\prime\theta$ be polar coordinates for $\R^n$. This change of variables is justified when the kernel is applied to a test function in $\ccinf(M_1)$; using the proof of \cite[Theorem 7.8.2]{hormander-i} it can be shown that it is justified for the kernel itself. Then we obtain
\[
K_{N_w}^+ = \int_{\R^n} e^{i(\varphi(x, \xi) - \varphi(y, \xi))} W\left(x, y, \frac{\xi}{|\xi|}\right) |\xi|^{1-n} \, d\xi
\]
By the global Bolker condition, $\partial_\xi\varphi(x, \xi) = \partial_\xi\varphi(y, \xi)$ implies $x = y$. A stationary phase argument implies that $K_{N_w}^+$ is a smooth function away from the diagonal of $M_1 \times M_1$.

Fix $x_0 \in M_1$. There exists a neighborhood $U$ of $x_0$ on which we have normal coordinates, which we refer to again with $(x^i)$, such that $x(x_0) = 0$. We then use $(x^i, y^i)$ as coordinates on $U \times U$, with $x^i = y^i$. We consider the localized kernel
\[
 \chi K_{N_w}^+\chi = \int_{\R^n} e^{i(\varphi(x, \xi) - \varphi(y, \xi))} W\left(x, y, \frac{\xi}{|\xi|}\right) \chi(x)\chi(y) |\xi|^{1-n} \, d\xi.
\]
In these local coordinates, we can expand the phase function near the diagonal $x = y$. 
\[
\varphi(x, \xi) - \varphi(y, \xi) = (x-y)\cdot \int_0^1 \partial_x \varphi(x + t(y-x), \xi)\, dt 
\]
Define the map
\[
 \xi^\prime(x, y, \xi) = \int_0^1 \partial_x\varphi(x + t(y-x), \xi)\, dt.
\]
Near the diagonal, this map is smooth, and
\[
 \det\left(\frac{\partial\xi^\prime}{\partial\xi}(x, x, \xi)\right) = \det\left(\partial_{\xi x}\varphi(x, \xi)\right) = h(x, \xi) > 0.
\]
and so the map $(x, y, \xi) \mapsto (x, y, \xi^\prime)$ is a diffeomorphism of a neighborhood of the diagonal onto another neighborhood of the diagonal. It may be necessary to shrink the support of $\chi$ slightly for the change of coordinates to be well-defined. In these variables,
\[
\varphi(x, \xi) - \varphi(y, \xi) = (x-y)\cdot \xi^\prime.
\]
Both sides are positive homogeneous of degree one, which implies that $|\xi^\prime| = c(x, y)|\xi|$ with $c(x, y)$ a strictly positive, smooth function defined near the diagonal. Clearly $c(x, x) = |\partial_x\varphi(x, \xi)|$. This reduces the cut-off kernel of $N_w^+$ to an honest pseudodifferential operator
\[
\chi K^+_{N_w}\chi = \int_{\R^n} e^{i(x-y)\cdot\xi^\prime} W\left(x, y, \frac{\xi^\prime}{|\xi^\prime|}\right) \chi(x)\chi(y)|\xi^\prime|^{1-n} c(x, y)^{n-1} \left|\det \frac{\partial\xi^\prime}{\partial\xi}\right|^{-1} \, d\xi^\prime
\]
To evaluate the principal symbol of $K^+_{N_w}$, we restrict the amplitude to the diagonal $x = y$. The principal symbol of $N_w$ is the sum of those for $N_w^+$ and $N_w^-$.
\end{proof}

This reconfirms that $N_w$ is an elliptic pseudodifferential operator of order $1-n$, provided the weight is nonvanishing and the global Bolker condition is satisfied.

We now consider the stability of reconstructing $f \in L^2_c(M_1)$ from $R_wf$, using the analysis of the normal operator $N_wf = R_w^* R_wf$ from the previous section. The basic estimate follows from elliptic regularity; the stability estimate follows from \cite[Prop V.3.1]{taylor81}.

\begin{lemma}
Let $w \in \cinf(M \times S^{n-1})$ be a nonvanishing weight and let $\varphi \in \cinf(M \times (\R^n \setminus 0))$ be a defining function. Then for all $f \in L^2(M)$ and $s > 0$ there exists $C > 0$ and $C_s > 0$ depending on $s$ such that
\[
 ||f||_{L^2(M)} \le C||N_w f||_{H^{n-1}(M_1)} + C_s||f||_{H^{-s}}.
\]
If, in addition, $N_w : L^2(M) \to H^{n-1}(M_1)$ is injective, then we have a stability estimate with a loss of $n-1$ derivatives,
\[
 ||f||_{L^2(M)} \le C^\prime||N_w f||_{H^{n-1}(M_1)}
\]
with a different constant $C^\prime > 0$.
\end{lemma}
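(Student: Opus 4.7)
The plan is to prove the two estimates in sequence, using standard pseudodifferential machinery for the first and an abstract compactness argument for the second.

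For the first estimate, I would begin by recalling that the preceding lemma shows $N_w$ is an elliptic pseudodifferential operator of order $-(n-1)$ on $M_1$, with nonvanishing principal symbol $p(x,\xi)$. By the standard parametrix construction in the calculus of classical pseudodifferential operators, there exists $Q \in \Psi^{n-1}(M_1)$, properly supported on a neighborhood of $M$, such that
\[
  Q N_w = I + K
\]
on distributions supported in $M$, where $K$ is a smoothing operator. Applying $Q$ to $N_w f$ and rearranging gives $f = Q N_w f - K f$. Since $Q$ is continuous $H^{n-1}(M_1) \to L^2(M_1)$ (at least when localized near $M$) and $K$ is continuous $H^{-s} \to L^2$ for every $s > 0$, taking $L^2$ norms yields
\[
  \|f\|_{L^2(M)} \le C\|N_w f\|_{H^{n-1}(M_1)} + C_s \|f\|_{H^{-s}(M_1)}.
\]
The only subtlety here is the proper handling of supports: since $f$ is extended by zero from $M$ and $M$ is compactly embedded in $M_1$, one may apply a fixed cutoff equal to one on $M$ before and after $Q$ to stay on compactly supported pieces; this cutoff can be absorbed into the parametrix construction.

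For the second estimate, I would invoke the standard functional-analytic upgrade, as in Taylor's Proposition V.3.1. Assume for contradiction that no uniform $C'$ exists; then there is a sequence $f_k \in L^2(M)$ with $\|f_k\|_{L^2} = 1$ and $\|N_w f_k\|_{H^{n-1}(M_1)} \to 0$. Because each $f_k$ is supported in the compact set $M$ and $\|f_k\|_{L^2}$ is bounded, the inclusion $L^2(M) \hookrightarrow H^{-s}(M_1)$ is compact for any $s > 0$, so after passing to a subsequence $f_k$ converges in $H^{-s}$ to some $f$. Applying the first estimate to the difference $f_k - f_j$ gives
\[
  \|f_k - f_j\|_{L^2(M)} \le C \|N_w(f_k - f_j)\|_{H^{n-1}(M_1)} + C_s \|f_k - f_j\|_{H^{-s}(M_1)},
\]
and both terms on the right tend to zero, so $\{f_k\}$ is Cauchy in $L^2$ and converges to some $f \in L^2(M)$ with $\|f\|_{L^2} = 1$. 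Continuity of $N_w$ on these spaces then forces $N_w f = 0$, contradicting injectivity and completing the proof.

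The main obstacle I anticipate is purely bookkeeping: ensuring that the parametrix $Q$ and the smoothing remainder $K$ are set up so the mapping properties respect the support constraint (functions supported in $M$, but the image lives on the slightly larger $M_1$). Once the parametrix is localized correctly near $M$ inside $M_1$, both estimates follow from routine arguments, with the injectivity-plus-compact-embedding step being the standard device for converting a Gårding-type inequality into a genuine stability bound.
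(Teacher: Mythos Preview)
Your proposal is correct and follows exactly the approach the paper takes: the paper's proof consists of two citations---``elliptic regularity'' for the first estimate and \cite[Prop.~V.3.1]{taylor81} for the second---and you have simply written out the standard parametrix and compactness arguments underlying those citations, even invoking the same Taylor reference. There is nothing to add.
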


In particular, by Proposition 1, the latter stability estimate holds when the geometric data (i.e., $M, \varphi$ and $w$) are analytic. This also follows directly from \cite{boman87}. Our main contribution is to extend this stability estimate by perturbation to a generic set of smooth geometric data. We begin by using the standard pseudodifferential calculus to show that the normal operator depends continuously on finitely many derivatives of the data.

\begin{lemma}
Let $(M_1, g)$ be an open Riemannian manifold with an embedded compact manifold $M$ with boundary. Let $\varphi_1, \varphi_2$ be two defining functions and $w_1, w_2$ be two nonvanishing weights. Let $N_1 = R_{w_1}^* R_{w_1}$ and $N_2 = R_{w_2}^* R_{w_2}$. There exists a $K \gg n$ such that if
\[
 ||\varphi_1 - \varphi_2||_{C^K(M_1 \times S^{n-1})}, ||w_1 - w_2||_{C^K(M_1 \times S^{n-1})} < \delta \ll 1,
\]
then there exists $C > 0$ depending \emph{a priori} on the $C^K(M_1 \times S^{n-1})$ norm of $\varphi_1$ and $w_1$ such that
\[
 ||(N_1 - N_2)f||_{H^{n-1}(M_1)} \le C \delta||f||_{L^2(M_1)}.
\]
\end{lemma}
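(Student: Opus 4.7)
The plan is to treat $N_1 - N_2$ itself as a pseudodifferential operator of order $1-n$ whose full symbol depends linearly on $\varphi_1-\varphi_2$, $w_1-w_2$ and finitely many of their derivatives, so that its $S^{1-n}$ seminorms are $O(\delta)$. Since pseudodifferential operators with symbol in $S^{1-n}$ map $L^2$ to $H^{n-1}$ with a bound depending on only finitely many symbol seminorms, this is enough. It is convenient to work infinitesimally: set
\[
\varphi_t = (1-t)\varphi_2 + t\varphi_1, \qquad w_t = (1-t)w_2 + tw_1, \qquad t\in[0,1].
\]
Both the global Bolker condition and the strict positivity of the mixed Hessian are open conditions in $C^2(M_1\times S^{n-1})$, so for $\delta$ small enough every $\varphi_t$ is a defining function satisfying Bolker and every $w_t$ is nowhere vanishing. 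Writing $N_t = R^*_{w_t,\varphi_t}R_{w_t,\varphi_t}^{\vphantom{*}}$, we have $N_1-N_2 = \int_0^1 \tfrac{d}{dt}N_t\, dt$, so it suffices to prove $||\dot N_t f||_{H^{n-1}(M_1)} \le C\delta\,||f||_{L^2(M_1)}$ uniformly in $t\in[0,1]$.

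Differentiating the kernel formula \eqref{normal-kernel} in $t$ produces two contributions. In the first, $\dot W_t$ replaces $W_t$ in the amplitude; because $J_t$ depends on $\varphi_t$ through its first $x$-derivatives, $\dot W_t$ is linear in $w_1-w_2$, $\varphi_1-\varphi_2$ and their first derivatives, hence is $O(\delta)$ in $C^{K-1}$. The second contribution comes from differentiating the phase, and after using positive homogeneity to rewrite $s'=|\xi|$ it carries the factor
\[
i\bigl[(\varphi_1-\varphi_2)(x,\xi) - (\varphi_1-\varphi_2)(y,\xi)\bigr] = i(x-y)\cdot\tilde G_t(x,y,\xi),
\]
where $\tilde G_t(x,y,\xi) = \int_0^1 \partial_x(\varphi_1-\varphi_2)(x+r(y-x),\xi)\,dr$ is positively homogeneous of degree one in $\xi$ and $O(\delta)$ in $C^{K-1}$. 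After localizing near the diagonal and performing the $\varphi_t$-dependent change of variables $\xi\mapsto\xi'_t$ from the previous lemma, the first contribution becomes a pseudodifferential operator of order $1-n$ with $O(\delta)$ symbol; in the second, the pre-factor raises the order by one, but the identity $(x-y)e^{i(x-y)\cdot\xi'} = -i\partial_{\xi'}e^{i(x-y)\cdot\xi'}$ permits a single integration by parts that reduces it back to order $1-n$, leaving a factor of $\partial_x(\varphi_1-\varphi_2)$ in the symbol. In both cases finitely many $S^{1-n}$ seminorms of the resulting symbol are bounded by $C\delta$.

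Away from the diagonal $x=y$ the global Bolker condition prevents any stationary phase cancellation, so the argument used in the proof of the previous lemma shows $K_{N_t}$ is smooth there with seminorms controlled by finitely many $C^M$ derivatives of $\varphi_t,w_t$. Differentiating in $t$, the off-diagonal part of $\dot N_t$ has a $C^M$-smooth kernel of size $O(\delta)$ for some $M=M(n)$, and hence maps $L^2$ into $H^{n-1}$ with norm $O(\delta)$. Combining with the near-diagonal piece and the Calder\'on--Vaillancourt-type $L^2\to H^{n-1}$ continuity theorem for $\Op(S^{1-n})$ (underlying \cite[Prop V.3.1]{taylor81}) yields the claimed estimate.

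The main obstacle is the careful derivative bookkeeping needed to pin down $K$: constructing and inverting the family of diffeomorphisms $\xi\mapsto\xi'_t$ uniformly in $t$, verifying $S^{1-n}$ seminorm bounds on the resulting symbols, carrying out the stationary phase estimate for the off-diagonal piece, and invoking the $L^2\to H^{n-1}$ continuity theorem each consume a fixed number of $x,y,\xi$ derivatives of the amplitude, and therefore the same number of $C^K$ derivatives of $\varphi_t$ and $w_t$. Summing these costs produces an explicit $K=K(n)$ for which the argument closes, with $C$ depending only on $||\varphi_1||_{C^K}$ and $||w_1||_{C^K}$, using $||\varphi_t-\varphi_1||_{C^K},||w_t-w_1||_{C^K}\le\delta$.
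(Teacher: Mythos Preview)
Your argument is correct and takes a more explicit route than the paper's. The paper simply observes that after the change of variables $\xi\mapsto\xi'$ of Lemma~2 (which itself depends on $\varphi$), the full amplitude of $N_j^\pm$ is a smooth function of $\varphi_j,w_j$ and finitely many of their derivatives; hence if $\varphi_1,\varphi_2$ and $w_1,w_2$ are $\delta$-close in $C^K$, the amplitudes of $N_1^\pm$ and $N_2^\pm$ are $O(\delta)$-close in $C^{K-2}$, and the continuity of pseudodifferential operators in their amplitude (H\"ormander, \cite[Theorem 18.1.11 ff.]{hormander-i}) then bounds $\|N_1^\pm-N_2^\pm\|_{L^2\to H^{n-1}}$ by a seminorm of the amplitude difference. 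Your homotopy unpacks this smooth dependence: differentiating $N_t$ in $t$ keeps the phase fixed and isolates the two variation terms, and the single integration by parts in $\xi'$ cleanly restores the phase-variation contribution to order $1-n$ with an $O(\delta)$ symbol. Both arguments ultimately rest on the same $L^2\to H^{n-1}$ continuity for $\Op(S^{1-n})$; the paper's version is shorter, while yours makes the derivative bookkeeping---and in particular the role of the $\varphi$-dependent change of variables---more transparent.
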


\begin{proof}
We have seen in the previous lemmas that $N_1$ and $N_2$ are both elliptic pseudodifferential operators with symbols depending on $\varphi_1, \varphi_2$ and $w_1, w_2$ respectively. Let $K$ be an arbitrary, large natural number to be fixed later. If the defining functions and weights are $\delta$--close in $C^K(M_1)$, then it follows from Lemma 2 that the amplitudes are $O(\delta)$ in $C^{K-2}(M_1)$. By the continuity of pseudodifferential operators \cite[Theorem 18.3.11 and ff.]{hormander-i} the operator norm of $N^\pm_1 - N^\pm_2$ is bounded by a constant multiplied by some $\cinf(M_1)$--seminorm of the difference of the amplitudes. Take $K$ large enough so that 
\[
 ||N^\pm_1 - N^\pm_2||_{L^2_c(M_1) \to H^{n-1}(M_1)} = O(\delta).
\]
The lemma follows from adding the positive and negative parts of the estimate together. Notice that $K$, the necessary number of derivatives, does not depend on the defining functions or the weights themselves.
\end{proof}

It is of interest to determine the minimal regularity necessary for the perturbation result of the previous lemma. For the related geodesic ray transform, it is known that the geometric data need only be $\delta$--close in $C^2$ \cite{frigyik08}. One would then expect the above to hold for data $\delta$--close in $C^n$.

We now able to prove with our main result, Theorem \ref{stability-perturbation}:

\begin{proof}[{Proof of Theorem \ref{stability-perturbation}}]
Recall $R_w$ is an injective generalized Radon transform. Lemma 3 yields the following stability estimate:
\[
 ||f||_{L^2(M_1)} \le C_1||R_w^* R_w f||_{H^{n-1}(M_1)}.
\]
Then Lemma 4 allows us to perturb this estimate using
\[
 ||(R_w^* R_w - \tilde{R}_{\tilde{w}}^* \tilde{R}_{\tilde{w}}) f||_{H^{n-1}(M_1)} \le C_2\delta||f||_{L^2(M_1)}.
\]
Therefore,
\[
 ||f||_{L^2(M_1)} \le C_1||\tilde{R}_{\tilde{w}}^*\tilde{R}_{\tilde{w}} f||_{H^{n-1}(M_1)} + C_1C_2\delta||f||_{L^2(M_1)}.
\]
For $\delta < \min\{(2C_1C_2)^{-1}, 1/2\}$, the second term on the right-hand side may be absorbed into the left. The resulting stability estimate for the perturbed normal operator implies injectivity of $\tilde{R}_{\tilde{w}}$.
\end{proof}

\bibliographystyle{abbrv}
\bibliography{master}

\end{document}